\newtheorem {Lemma}{Lemma}
\newtheorem {Theorem} {Theorem}
\begin{document}

\title{On the transmission of uniform unicyclic hypergraphs}

\author{Hongying Lin\footnote{ E-mail:
lhongying0908@126.com}, Bo Zhou\footnote{Corresponding author. E-mail:
zhoubo@scnu.edu.cn}\\[1.5mm]
School of Mathematical Sciences, South China Normal University, \\
Guangzhou 510631, P. R. China }

\date{}
\maketitle

\begin{abstract}  The transmission of a connected hypergraph is defined as the summation of distances between all unordered pairs of distinct vertices.
We determine the unique uniform unicyclic hypergraphs of fixed size with minimum and maximum transmissions, respectively.
\\ \\
{\it  AMS classifications:}  05C65, 05C35\\ \\
{\it  Key words:} uniform hypergraph, unicyclic hypergraph, transmission, extremal hypergraph
\end{abstract}

\section{Introduction}

A hypergraph $G$  is a pair $(V, E)$, where $V=V(G)$ is a nonempty finite set called the vertex set of $G$ and $E=E(G)$
is a family of subsets of $V(G)$ called the edge set of $G$ \cite{Be}.
The size of $G$ is the  cardinality of $E(G)$.
For an integer $k\ge 2$, a hypergraph is $k$-uniform if all its edges have cardinality $k$.
In this paper we only consider $k$-uniform hypergraphs where $k\ge 2$
is fixed.

A sequence of $v_0e_1v_1\dots e_kv_k$, where $v_i$ are vertices of $H$,  $e_i$ are its edges, and $v_{i-1}, v_i\in e_i$ for $i=1, \dots, k$, is a walk of length $k$ between $v_0$ and $v_k$. By a component of $H$ containing some vertex $v\in V(G)$, we mean a subhypergraph which consists of all vertices $v¡¯$ and edges $e¡¯$ belonging to some walk containing $v$. $H$ is connected if it has only one component.

For $u, v\in V(G)$, a walk  from $u$ to $v$ in $G$ is defined to be an alternating sequence of  vertices and edges
$(v_0,e_1,v_1,\dots,v_{p-1},e_p, v_p)$ with $v_0=u$ and $v_p=v$ such that
$v_{i-1}\ne v_i$ and
$v_{i-1}, v_i\in e_i$  for $i=1,\dots,p$. The value $p$ is the length of this walk.
A path is a walk with all $v_i$ distinct and all $e_i$ distinct. In particular, a vertex $u\in V(G)$ is viewed as a path (from $u$ to $u$) of length $0$.
A cycle is a walk containing at least
two edges, all $e_i$ are distinct and all $v_i$ are distinct except $v_0=v_p$.

If there is a path from $u$ to $v$ for any $u,v\in V(G)$, then we say that $G$ is connected.
A hypertree is a connected hypergraph with no cycles. A unicyclic hypergraph is a connected hypergraph with exactly one cycle. Note that a
$k$-uniform unicyclic hypergraph of size $m\ge 2$ always has $m(k-1)$ vertices, where $m\ge 3$ if $k=2$. 

Let $G$ be a connected  hypergraph. For $u,v\in V(G)$, the distance between $u$ and $v$ is the length of a shortest path from $u$ to $v$ in $G$,
denoted by $d_{G}(u,v)$. In particular, $d_{G}(u,u)=0$.
The transmission $\sigma (G)$ of $G$ is defined as the summation of distances between all unordered pairs of distinct vertices in $G$, i.e., $\sigma (G)=\sum_{\{u,v\}\subseteq V(G)} d_{G}(u,v)$.
This concept is also known as  the sum of the distances \cite{Pl} and the Wiener index \cite{DEG,Ho, HJ,NT,R, R1,Wi}. The average (or mean) distance of $G$ is just $\frac{2}{n(n-1)}\sigma(G)$  with $n=|V(G)|$ \cite{BG,Ch,DE,DG,He,KW,Mo}. Networks with small transmission have good properties, thus they are often desirable. In recent years, bounding the transmission has been a topic of study for various authors. Hypergraph theory has been used in chemistry,
see, e.g.~\cite{GKS,KS,KS2,Ko}. As indicated in \cite{KS}, the hypergraph model gives a higher accuracy
of molecular structure description: the higher the accuracy of the model, the greater the diversity of the behavior of
its invariants.


Tang and Deng \cite{TD} determined  the unique unicyclic graphs ($2$-uniform unicyclic hypergraphs)   with maximum and minimum transmissions, respectively.
The transmissions of a connected hypergraph was discussed in \cite{Ko}.
Sun et al.~\cite{SW} computed the transmissions of some
special $k$-uniform hypergraphs, and provided a  lower bound for Wiener
 index of a $k$-uniform hypergraph with given circumference.
Guo et al.~\cite{GZL} determined the unique $k$-uniform hypertrees with maximum, second maximum and third maximum transmissions, as well as the unique $k$-uniform  hypertrees with  minimum, second minimum and third minimum transmissions, respectively. We established  a relation between degree
distance and the transmissions and a  relation between the Gutman index and the transmission for uniform hypertrees, see \cite{GZ}.

In this article, we characterize the unique uniform unicyclic hypergraphs of fixed size  with minimum and maximum transmissions, respectively.

\section{Preliminaries}

Let $G$ be a connected hypergraph. Let $\sigma _{G}(u)=\sum_{v\in V(G)}d_G(u,v)$. Then $\sigma (G)=\frac{1}{2}\sum_{u\in V(G)} \sigma _{G}(u)$. For $A\subseteq V(G)$, let $\sigma _G(A)=\sum_{\{u,v\}\subseteq A}d_{G}(u,v)$.
For $A,B\subseteq V(G)$ with $A\cap B=\emptyset$, let $\sigma _G(A,B)=\sum_{a\in A,b\in B} d_G(a,b)$.

For $X\subseteq V(G)$ with $X\ne\emptyset$, let $G[X]$ be the subhypergraph of $G$ induced by $X$, i.e., $G[X]$ has vertex set $X$ and edge
set $\{e\subseteq X: e\in E(G)\}$. 
For $e\in E(G)$, let $G-e$ be the subhypergraph of $G$ obtained by deleting  $e$.
For $u\in V(G)$, let $G-u$ be the subhypergraph of $G$ obtained by deleting $u$ and all edges containing $u$.
A component of a hypergraph $G$ is a maximal connected subhypergraph of $G$.

Let $G$ be a $k$-uniform hypergraph with $u,v\in V(G)$ and $e_1,\dots,e_r\in E(G)$
such that $u\in e_i$, $v\notin e_i$ and $e'_i\notin E(G)$ for $1\leq i\leq r$, where $e'_i=(e_i\setminus \{u\})\cup\{v\}$.
Let $G'$ be the hypergraph with $V(G')=V(G)$ and  $E(G')=(E(G)\setminus \{e_1,\ldots, e_r\} )\cup\{e'_1,\ldots,e'_r\}$.
Then we say that $G'$ is obtained from $G$ by moving edges $e_1,\ldots,e_r$ from $u$ to $v$.

For a hypergraph $G$ with  $v\in V(G)$, let  $E_G(v)$ be the set of edges of $G$ containing $v$.
The degree of a vertex $v$ in $G$, denoted by  $d_G(v)$, is defined as
$d_G(v)=|E_G(v)|$.

A path $P=( v_0,e_1,v_1,\dots,v_{p-1},e_p, v_p)$ in a $k$-uniform hypergraph $G$ is called a pendant path at $v_0$, if $d_G(v_0)\ge 2$,  $d_G(v_i)=2$ for $1\leq i\leq p-1$, $d_G(v)=1$ for $v\in e_i\setminus\{v_{i-1}, v_i\}$ with $1\leq i\leq p$,
and $d_G(v_p)=1$. A pendant edge is a pendant path of length $1$.

If $P$ is a pendant path of a hypergraph $G$ at $u$, we say $G$ is obtained from $H$ by attaching a pendant path $P$ at $u$ with $H=G[V(G)\setminus(V(P)\setminus\{u\})]$. If $P$ is a pendant edge at $u$ in $G$, then we also say that $G$ is obtained from $H$ by attaching a pendant edge at $u$.

Let $G$ be a connected $k$-uniform hypergraph with $ |E(G)|\geq 1$. For $u\in V(G)$, and  positive integers $p$ and $q$, let  $G_{u}(p,q)$ be the $k$-uniform hypergraph obtained from $G$ by attaching two pendant paths of lengths $p$ and $q$ at $u$, respectively,  and  $G_{u}(p,0)$ the $k$-uniform hypergraph obtained from $G$ by attaching a pendant path of length $p$ at $u$.

\begin{Lemma} \label{max-moving1}\cite{GZL} Let $G$ be a connected $k$-uniform hypergraph with $|E(G)|\geq1$ and $u\in V(G)$.
For integers $p\geq q\geq 1$, $\sigma (G_u(p, q))< \sigma (G_u(p+1, q-1))$.
\end{Lemma}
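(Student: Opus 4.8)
The plan is to realize $G_u(p+1,q-1)$ as the result of detaching the last edge of the short pendant path of $G_u(p,q)$ and re-attaching it at the end of the long pendant path, and then to track how the transmission changes under this local move. Write $G_u(p,q)$ with long pendant path $u=a_0,a_1,\dots,a_p$ (edges $f_1,\dots,f_p$) and short pendant path $u=b_0,b_1,\dots,b_q$ (edges $g_1,\dots,g_q$). Set $M:=g_q\setminus\{b_{q-1}\}$, the $k-1$ vertices of $g_q$ other than $b_{q-1}$ (namely $b_q$ and the $k-2$ degree-one vertices of $g_q$), and $R:=V(G_u(p,q))\setminus M$. There is a natural bijection $\phi$ from $V(G_u(p,q))$ to $V(G_u(p+1,q-1))$ that is the identity on $R$ and maps $M$ onto $f_{p+1}\setminus\{a_p\}$, the $k-1$ vertices of the newly attached edge. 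I would compute $\Delta:=\sigma(G_u(p+1,q-1))-\sigma(G_u(p,q))$ pair by pair through $\phi$.

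The first step is a reduction that annihilates almost every pair. Since $M$ is a pendant edge of $G_u(p,q)$ and $\phi(M)$ is a pendant edge of $G_u(p+1,q-1)$, no shortest path between two vertices of $R$ ever enters $M$ (respectively $\phi(M)$); hence $d_{G_u(p,q)}(x,y)=d_{G_u(p+1,q-1)}(x,y)$ for all $x,y\in R$, and all such pairs cancel in $\Delta$. Pairs inside $M$ also cancel, because $M$ and $\phi(M)$ are single edges, so their internal distances are all $1$. Thus only the mixed pairs $\{x,y\}$ with $x\in M$ and $y\in R$ survive. Here every vertex of $M$ reaches $R$ only through $b_{q-1}$, so $d_{G_u(p,q)}(x,y)=1+d(b_{q-1},y)$, and similarly $d_{G_u(p+1,q-1)}(\phi(x),y)=1+d(a_p,y)$, where the unsubscripted $d$ is the common distance on $R$, equal to the distance in $\tilde G:=G_u(p,q-1)$ (whose vertex set is precisely $R$). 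Summing over the $|M|=k-1$ vertices gives the clean identity $\Delta=(k-1)\bigl(\sigma_{\tilde G}(a_p)-\sigma_{\tilde G}(b_{q-1})\bigr)$, so everything reduces to comparing the transmissions of the endpoints of the long and short pendant paths of $\tilde G$.

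To evaluate $\sigma_{\tilde G}(a_p)-\sigma_{\tilde G}(b_{q-1})$ I would partition $R$ into the base $N=V(G)$ (with $u\in N$ and $|N|=n$), the long-path vertices $L$, and the short-path vertices $S$, and use the explicit distance structure of a pendant path: at each distance $i$ from $u$ there is one core vertex ($a_i$ or $b_i$) together with $k-2$ degree-one vertices, and any path from $a_p$ or $b_{q-1}$ to a vertex outside its own branch must pass through $u$. The contribution of the base $N$ is the transparent term $n(p-q+1)$, obtained because $d(a_p,y)=p+d_G(u,y)$ while $d(b_{q-1},y)=(q-1)+d_G(u,y)$ for $y\in N$.

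The main obstacle is that the comparison is \emph{not} positive term by term: for $y$ lying on the long path $L$, the vertex $b_{q-1}$ is generally much closer to $y$ than $a_p$ is, so $d(a_p,y)-d(b_{q-1},y)$ is negative, and one must exploit cancellation rather than any monotonicity. Carrying out the two arithmetic-progression sums over $L$ and over $S$ (splitting each edge into its core vertex and its $k-2$ degree-one vertices), I expect the $L$- and $S$-contributions to collapse exactly to $-(p-q+1)$, so that $\sigma_{\tilde G}(a_p)-\sigma_{\tilde G}(b_{q-1})=(n-1)(p-q+1)$ and therefore $\Delta=(k-1)(n-1)(p-q+1)$. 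This is strictly positive: $G$ has an edge, so $n=|V(G)|\ge k\ge 2$ gives $n-1\ge 1$, and $p\ge q$ gives $p-q+1\ge 1$. Hence $\sigma(G_u(p,q))<\sigma(G_u(p+1,q-1))$, as required; as a bonus the argument yields the exact increment $(k-1)(n-1)(p-q+1)$.
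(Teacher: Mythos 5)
The paper does not prove this lemma at all --- it is quoted from \cite{GZL} and used as a black box --- so there is no in-paper argument to compare against; your proposal has to stand on its own, and it does. The reduction is sound: $g_q$ is a pendant edge whose only attachment vertex is $b_{q-1}$, so distances between vertices of $R$ agree with distances in $\widetilde G=G_u(p,q-1)$ before and after the move, pairs inside $M$ all have distance $1$, and every mixed pair factors through the attachment vertex, giving exactly $\Delta=(k-1)\bigl(\sigma_{\widetilde G}(a_p)-\sigma_{\widetilde G}(b_{q-1})\bigr)$. The one place you hedge (``I expect the $L$- and $S$-contributions to collapse to $-(p-q+1)$'') does check out: the core vertices and the $k-2$ pendant vertices of each path edge contribute
\[
\sum_{i=1}^{p}\bigl[(p-q+1-2i)+(k-2)(p-q+2-2i)\bigr]+\sum_{i=1}^{q-1}\bigl[(p-q+1+2i)+(k-2)(p-q+2i)\bigr],
\]
and the two $(k-2)$-terms cancel exactly while the core terms sum to $-pq+(q-1)(p+1)=-(p-q+1)$, so indeed $\sigma_{\widetilde G}(a_p)-\sigma_{\widetilde G}(b_{q-1})=(n-1)(p-q+1)$ and $\Delta=(k-1)(n-1)(p-q+1)>0$. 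To turn the proposal into a finished proof you only need to write out that routine arithmetic; the exact increment you obtain is a genuine bonus over the bare inequality stated in the lemma, and the edge-relocation bookkeeping you use is the same style of argument the present paper employs in Lemmas~\ref{cycle-min-moving1}--\ref{cycle-max-moving3}.
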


Let $G$ be a connected $k$-uniform hypergraph with $u,v\in e\in E(G)$.  For positive integers $p$ and  $q$, let   $G_{u,v}(p,q)$ be the $k$-uniform hypergraph obtained from $G$ by attaching a pendant path of length $p$ at $u$ and a pendant path   of length $q$ at $v$, and $G_{u,v}(p,0)$ the $k$-uniform hypergraph obtained from $G$ by attaching a pendant path of length $p$ at $u$. Let $G_{u,v}(0,q)=G_{v,u}(q, 0)$.

\begin{Lemma} \label{max-moving2}\cite{GZL}
Let $G$ be a connected $k$-uniform hypergraph with $|E(G)|\geq 2$,  $u,v\in e \in E(G)$ and $d_G(u)=1$.
For integers $p\geq q\geq 1$, $\sigma (G_{u,v}(p, q))< \sigma (G_{u,v}(p+1, q-1))$.
\end{Lemma}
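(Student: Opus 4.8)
The plan is to compare $\sigma(G_{u,v}(p+1,q-1))$ with $\sigma(G_{u,v}(p,q))$ by realizing both as a common base hypergraph with one extra pendant edge attached in two different places, and then to reduce everything to comparing the transmissions of two vertices in that base. Write $G_1=G_{u,v}(p,q)$ and $G_2=G_{u,v}(p+1,q-1)$, let the pendant path at $u$ have main vertices $u=w_0,w_1,\dots,w_p$, let the pendant path at $v$ have main vertices $v=x_0,x_1,\dots$, and set $B=G_{u,v}(p,q-1)$. Then $G_1$ is $B$ with a pendant edge attached at $x_{q-1}$, while $G_2$ is $B$ with a pendant edge attached at $w_p$; thus $G_1$ and $G_2$ differ only in where this last segment sits.

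First I would isolate a reduction step. If $H$ is connected and $H^{+}$ is obtained from $H$ by attaching a pendant edge at a vertex $a$, then each of the $k-1$ new vertices lies at distance $1+d_H(a,z)$ from every $z\in V(H)$ (the new edge is the only exit), and the new vertices are pairwise at distance $1$. Hence $\sigma(H^{+})=\sigma(H)+\binom{k-1}{2}+(k-1)\bigl(|V(H)|+\sigma_H(a)\bigr)$. Applying this with $H=B$ once at $a=x_{q-1}$ and once at $a=w_p$ and subtracting, the common terms cancel and one gets $\sigma(G_2)-\sigma(G_1)=(k-1)\bigl(\sigma_B(w_p)-\sigma_B(x_{q-1})\bigr)$. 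Since $k\ge 2$, the whole lemma reduces to the single inequality $\sigma_B(w_p)>\sigma_B(x_{q-1})$.

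To establish this I would partition $V(B)$ into the core $V(G)$, the new vertices $U$ on the $u$-path, and the new vertices $W$ on the $v$-path, and evaluate $\sigma_B(w_p)-\sigma_B(x_{q-1})=\sum_{z\in V(B)}\bigl(d_B(w_p,z)-d_B(x_{q-1},z)\bigr)$ class by class. Each pendant path is distance-convex and symmetric under reversal, which makes the intra-path sums from the two endpoints coincide. Concretely, for $z\in V(G)$ one has $d_B(w_p,z)=p+d_G(u,z)$ and $d_B(x_{q-1},z)=(q-1)+d_G(v,z)$; for $z\in U$ one routes through $u$; and for $z\in W$ one routes through $v$ across the edge $e$, using $d_B(u,v)=1$. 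Carrying out the three sums and invoking the reversal symmetry to match the two intra-path totals, the $(k-1)$-weighted terms combine to $-(k-1)(p-q+1)$ and the remaining path terms to $-(p-q+1)$, leaving
\[
\sigma_B(w_p)-\sigma_B(x_{q-1})=(p-q+1)\bigl(|V(G)|-k\bigr)+\bigl(\sigma_G(u)-\sigma_G(v)\bigr).
\]
Because $p\ge q$ gives $p-q+1\ge 1$, and $|E(G)|\ge 2$ with connectedness forces $|V(G)|\ge k+1$, the first term is at least $1$, so it remains only to show $\sigma_G(u)\ge\sigma_G(v)$.

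This last inequality is exactly where the hypothesis $d_G(u)=1$ enters, and I expect it to be the crux. Since $u$ lies on the single edge $e$, every shortest path from $u$ to a vertex $z\ne u$ must begin by traversing $e$; replacing that first step by one starting at $v\in e$ yields a walk from $v$ to $z$ of no greater length, so $d_G(v,z)\le d_G(u,z)$ for every $z\ne u$. The only exceptional contributions come from $z=u$ and $z=v$, and their discrepancies cancel, whence $\sigma_G(u)-\sigma_G(v)\ge 0$. Combined with the displayed identity this forces $\sigma_B(w_p)-\sigma_B(x_{q-1})>0$, and therefore $\sigma(G_{u,v}(p,q))<\sigma(G_{u,v}(p+1,q-1))$. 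The main obstacle is the careful distance bookkeeping across the three vertex classes, in particular handling the degree-one side vertices of each hyperedge so that the reversal symmetry genuinely applies, together with pinning down that $d_G(u)=1$ is needed precisely for $\sigma_G(u)\ge\sigma_G(v)$ and nowhere else.
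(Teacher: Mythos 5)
The paper states Lemma~\ref{max-moving2} without proof, importing it from \cite{GZL}, so there is no in-paper argument to compare against; your proposal therefore has to stand on its own, and it does. The reduction is sound: peeling off the last edge of the longer configuration realizes both $G_{u,v}(p,q)$ and $G_{u,v}(p+1,q-1)$ as $B=G_{u,v}(p,q-1)$ with a pendant edge attached at $x_{q-1}$ or at $w_p$ respectively, and the identity $\sigma(H^{+})=\sigma(H)+\binom{k-1}{2}+(k-1)\bigl(|V(H)|+\sigma_H(a)\bigr)$ is correct because the attached edge meets $H$ only in $a$. I checked the three-class computation: the $U$-class contributes $-p-(k-1)pq$, the $W$-class contributes $(q-1)(k-1)(p+1)+(q-1)$, and together with the $V(G)$-class these do collapse to $(p-q+1)\bigl(|V(G)|-k\bigr)+\sigma_G(u)-\sigma_G(v)$; the reversal symmetry you invoke holds because a side vertex of the $i$-th edge of a pendant path of length $p$ is at distance $i$ from one end and $p-i+1$ from the other, and these sums agree after summing over $i$. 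The hypotheses enter exactly where you say: $|E(G)|\ge 2$ forces $|V(G)|\ge k+1$ so the first term is positive, and $d_G(u)=1$ forces every path out of $u$ to start with $e$, whence $d_G(v,z)\le d_G(u,z)$ for $z\notin\{u,v\}$ and $\sigma_G(u)\ge\sigma_G(v)$. This is a complete and correct proof, arguably cleaner than a generic ``track how each pairwise distance changes'' argument since it isolates the entire comparison in one closed-form difference of vertex transmissions.
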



Let $G$ be a connected $k$-uniform hypergraph with $|E(G)|\geq 2$, and let $e=\{w_1,\dots, w_k\}$ be a pendant edge of $G$ at $w_k$. For $1\le i\le k-1$,
let $H_i$ be a connected $k$-uniform hypergraph  with $v_i\in V(H_i)$. Suppose that $G, H_1, \dots, H_{k-1}$ are vertex--disjoint.
For $0\le s\le k-1$, let $G_{e,s}(H_1,\dots, H_{k-1})$  be the hypergraph obtained by identifying $w_{i}$ of $G$ and $v_i$ of $H_i$ for each $i$ with $s+1\leq i\leq k-1$  and identifying  $w_{k}$ of $G$ and $v_i$ of $H_i$ for all $i$ with $1\leq i\leq s$.

\begin{Lemma} \label{cycle-min-moving1}
Suppose that $|E(H_j)|\geq 1$ for some $j$ with $1\leq j\leq k-1$.
Then $W (G_{e,0}(H_1,\dots,H_{k-1}))>\sigma (G_{e,s}(H_1,\dots,H_{k-1}))$ for $j\leq s \leq k-1$.
\end{Lemma}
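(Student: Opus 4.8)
Throughout I read the Wiener index $W$ on the left-hand side as the transmission $\sigma$. The plan is to compare the two hypergraphs directly by accumulating the differences of pairwise distances, exploiting that $G_0:=G_{e,0}(H_1,\dots,H_{k-1})$ and $G_s:=G_{e,s}(H_1,\dots,H_{k-1})$ are built on the \emph{same} vertex set and differ only in the attachment points of $H_1,\dots,H_s$: in $G_0$ each such $H_i$ is glued to $G$ at the degree-one vertex $w_i$ of the pendant edge $e$, whereas in $G_s$ it is glued at the branch vertex $w_k$. Accordingly I would fix the partition $V=C\cup P\cup A_1\cup\dots\cup A_{k-1}$, where $C=V(G)\setminus\{w_1,\dots,w_{k-1}\}$ is the ``core'' of $G$ (it contains $w_k$), $P=\{w_1,\dots,w_{k-1}\}$, and $A_i=V(H_i)\setminus\{v_i\}$ is the set of genuinely new vertices contributed by $H_i$. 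Writing $\delta_i(a)=d_{H_i}(v_i,a)$ for $a\in A_i$, the goal is the explicit formula
\[
\sigma(G_0)-\sigma(G_s)=\sum_{1\le i<i'\le s}|A_i|\,|A_{i'}|+(|C|-1)\sum_{i=1}^{s}|A_i|.
\]

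The key structural remark, which I would establish first, is that every $H_i$ is attached to the rest of the hypergraph through a single cut vertex (either $w_i$ or $w_k$). Hence no shortest path between two vertices lying outside $H_i$ ever enters $H_i$, and for $a\in A_i$ and any vertex $x$ outside $H_i$ one has the additive decomposition $d(a,x)=\delta_i(a)+d(\text{attachment of }H_i,\,x)$. With this in hand the bookkeeping is routine: comparing $G_0$ and $G_s$, the only pairs whose distance changes are those incident to a \emph{moved} block $A_i$ with $i\le s$. For such $i$, each $a\in A_i$ moves one step \emph{closer} to every vertex of $C$ (its gateway to the core changes from $w_i$, which is at distance $1$ from $w_k$, to $w_k$ itself), one step closer to every vertex of another moved block $A_{i'}$ ($i'\le s,\ i'\ne i$, as the two blocks now share the gateway $w_k$), and exactly one step \emph{farther} from the single vertex $w_i$; all remaining distances are preserved, since any two vertices of $e$ are at distance $1$, so replacing the gateway $w_i$ by $w_k$ changes neither the distance from $a$ to any $w_j$ with $j\ne i$ nor to any unmoved block $A_j$ with $j>s$. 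Summing these $\pm 1$ contributions over the relevant pairs yields the displayed formula.

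Finally I would verify positivity. Since $d_G(w_k)\ge 2$ there is an edge $f\ne e$ containing $w_k$, and as $w_1,\dots,w_{k-1}$ have degree $1$ they avoid $f$; thus $f\subseteq C$ and $|C|\ge k\ge 2$, giving $|C|-1\ge 1$. Because $H_j$ has an edge and $k$-uniformity forces $|V(H_j)|\ge k$, we get $|A_j|\ge k-1\ge 1$, and the hypothesis $j\le s$ guarantees that $A_j$ is among the moved blocks. Hence $(|C|-1)\sum_{i=1}^{s}|A_i|\ge (|C|-1)|A_j|>0$, while the first sum is nonnegative, so $\sigma(G_0)>\sigma(G_s)$, as claimed. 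The main obstacle is purely the distance bookkeeping of the second paragraph: one must justify the additive decomposition carefully, so that no hidden shortcut through a block attached at $w_k$ shortens a distance, and be exhaustive in classifying the pairs, since a single overlooked class would corrupt the coefficient of $\sum_i|A_i|$. Should the direct count feel error-prone, an equivalent route is to move one block at a time, proving $\sigma(G_{e,t-1})-\sigma(G_{e,t})=|A_t|\bigl(|C|-1+\sum_{i<t}|A_i|\bigr)\ge 0$, and then telescope from $0$ to $s$, the step $t=j$ being strict.
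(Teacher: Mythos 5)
Your proof is correct and takes essentially the same approach as the paper: both compare $G_{e,0}$ and $G_{e,s}$ pair by pair on the common vertex set, observing that each vertex of a moved block $A_i$ ($i\le s$) gets one step closer to every vertex of $V(G)\setminus(e\setminus\{w_k\})$ and one step farther from $w_i$, for a net contribution of $|A_i|\,(|V(G)|-k)>0$. The only difference is that you retain the nonnegative cross terms $\sum_{i<i'\le s}|A_i|\,|A_{i'}|$ to obtain an exact formula, whereas the paper simply discards them in an inequality.
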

\begin{proof}
Let $H=G_{e,0}(H_1,\dots,H_{k-1})$ and $H'=G_{e,s}(H_1,\dots,H_{k-1})$.
For  $1\leq i\leq k-1$, let $V_i=V(H_i)\setminus\{w_i\}$.
As we pass from $H$ to $H'$, for $1\leq i\leq s$, the distance between a vertex of $V_i$ and a vertex of $U=V(G)\setminus(e\setminus\{w_k\})$ is decreased by $1$, the distance between a vertex of $V_i$ and $w_i$ is increased by $1$, and the distance between any other vertex pair remains unchanged or is decreased. Thus
\begin{eqnarray*}
\sigma (H)-\sigma (H')
&\geq &\sum^s_{i=1} (\sigma _H(V_i, U)+ \sigma _H(V_i, \{w_i\}))\\
&&-\sum^s_{i=1}(\sigma _{H'}(V_i, U)+\sigma _{H'}(V_i, \{w_i\}))\\
&=&\sum^s_{i=1} (\sigma _H(V_i, U)-\sigma _{H'}(V_i, U))\\
&&+\sum^s_{i=1} (\sigma _H(V_i, \{w_i\}-\sigma _{H'}(V_i, \{w_i\}))\\
&=&\sum^s_{i=1}|V_i|(|V(G)|-k+1)-\sum^s_{i=1}|V_i|\\
&=&\sum^s_{i=1}|V_i|(|V(G)|-k)\\
&>&0,
\end{eqnarray*}
implying that $\sigma (H)>\sigma (H')$.
\end{proof}

For a $k$-uniform unicyclic hypergraph $G$  with $V(G)=\{v_1, \dots, v_n\}$, where $n=m(k-1)$, if $E(G)=\{e_1, \dots, $ $ e_m\}$, where $e_i=\{v_{(i-1)(k-1)+1}, \dots, v_{(i-1)(k-1)+k}\}$ for $i=1, \dots, m$, and $v_{(m-1)(k-1)+k}=v_1$, then we call $G$ a $k$-uniform loose cycle, denoted by $C_{n,k}$.

For a $k$-uniform hypertree $G$ with $n$ vertices, if all edges share a common vertex $v$, then we call $G$ a $k$-uniform hyperstar (with center $v$), denoted by $S_{n,k}$. In particular, $S_{1,k}$ consists of a single vertex.

Let $G$ be a connected $k$-uniform hypergraph with an induced subhypergraph $C_{g(k-1),k}$, where $k\geq3$ and $g\geq 2$. Label the vertices of $C_{g(k-1),k}$
as above   with $v_{(g-1)(k-1)+k}=v_1$. If $G$ is a $k$-uniform unicyclic hypergraph, then   $G-E(C_{g(k-1),k})$ consists of $g(k-1)$ components, denoted by $H_1,\dots, H_{g(k-1)}$ with $v_i\in V(H_i)$ for $i=1,\dots, g(k-1)$.  In this case, we denote $G$ by $C^k_{g(k-1)}\left(H_1,\dots, H_{g(k-1)}\right)$.
%
%
%
If $H_{(i-1)(k-1)+1}$ is a $k$-uniform hyperstar $S_{t_i,k}$ with center $v_{(i-1)(k-1)+1}$ for $i=1, \dots, g$, and $V(H_j)=\{v_j\}$ for $j=(i-1)(k-1)+2,\ldots,i(k-1)$, where $t_i=|V(H_i)|$, then
we denote $C^k_{g(k-1)}\left(H_1,\dots, H_{g(k-1)}\right)$ by $C^k_g(t_1,\dots,t_g)$.

\section{Uniform unicyclic hypergraphs with minimum and maximum  transmissions}

Let $G$ be a unicyclic graph of size $m$. Then $m\ge 3$. Let $CP_m$ be the unicyclic graph obtained by identifying a vertex of $C_{3,2}$ and an end vertex of $P_{2m-5,2}$.
If $m=3$, then $G\cong CP_m$, and if $m=4$, then $G\cong C_{4,2}$ or  $C^2_3(1,0,0)$ and it is easy to see that these two graphs have equal Wiener indices.
If $m\ge 5$,  then $\sigma (G)\ge m+2\left[{m\choose 2}-m\right]=m(m-2)$ with equality
if and only if the diameter of $G$ is at most $2$, i.e.,
$G\cong CP_m$ or $C_{5,2}$ for $m=5$, and  $CP_m$ for $m\ge 6$.

\begin{Theorem} \label{cycle-min} For $k\geq 3$, let $G$ be a $k$-uniform unicyclic hypergraph of size $m\geq 2$ with minimum transmission.
Then $G\cong C_{3k-3,k}$ if $m=3$, and $G\cong C^k_2(m-2,0)$ if $m=2$ or $m\ge 4$.
\end{Theorem}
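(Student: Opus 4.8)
The plan is to show that among all $k$-uniform unicyclic hypergraphs of size $m$, the transmission is minimized by concentrating as much structure as possible around a short cycle, and that the unique optimizer is the claimed hypergraph. I would first dispose of the small cases $m=2$ and $m=3$ by direct inspection, since for $m=2$ there is essentially one candidate shape (a single loose cycle $C_{2k-2,k}$ versus the two-edge hypergraph $C^k_2(t_1,t_2)$ with $t_1+t_2$ fixed), and for $m=3$ the loose cycle $C_{3k-3,k}$ has diameter small enough that it beats every alternative; a finite comparison settles these.

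For the main range $m\ge 4$, the strategy is to reduce an arbitrary extremal $G$ to the form $C^k_2(m-2,0)$ by a sequence of transmission-decreasing moves. First I would argue that the girth (length of the unique cycle, in edges) of an extremal $G$ must be exactly $2$: if the cycle has $g\ge 3$ edges, I would apply Lemma~\ref{cycle-min-moving1} (after relabeling so that some branch hangs off a non-cut vertex of the cycle) to slide the attached subhypergraphs onto the cycle's corner vertices, strictly decreasing $\sigma$, and more importantly I would contract the cycle length by showing a configuration on a length-$g$ cycle is beaten by the corresponding configuration on a length-$2$ cycle. This girth-reduction is where I expect the main technical effort: I would need either a direct edge-moving lemma that shortens the cycle while redistributing edges as pendant structure, or a careful distance-accounting argument comparing $\sigma(C^k_{g}(\dots))$ against $\sigma(C^k_2(\dots))$ with the same total size, verifying that the denser packing around a $2$-cycle never increases any pairwise distance and strictly decreases enough of them.

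Once the cycle is forced to have exactly two edges, so $G = C^k_2(H_1,\dots,H_{2k-2})$, I would consolidate all pendant material onto a single vertex. Here the tools are Lemmas~\ref{max-moving1} and \ref{max-moving2}, but used in the minimizing direction: since those lemmas assert that splitting a pendant path into two strictly \emph{increases} transmission, their contrapositive/reverse shows that merging pendant paths and pulling them toward a high-degree attachment point strictly \emph{decreases} transmission. Concretely, any two pendant paths at distinct vertices, or at the same vertex, should be merged and then collapsed into a single hyperstar attached at one corner of the $2$-cycle; repeatedly applying this drives every $H_i$ except one into a single vertex and makes the surviving $H_i$ a hyperstar, yielding exactly $C^k_2(m-2,0)$.

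Finally I would confirm uniqueness: every reduction step above is \emph{strict}, so no two distinct hypergraphs can both attain the minimum, and by tracing that any $G$ not already isomorphic to the target admits at least one strictly decreasing move, the minimizer is unique. The delicate points to watch are (i) ensuring the hypotheses of Lemma~\ref{cycle-min-moving1} (namely that some $H_j$ is nontrivial and that $|V(G)|-k>0$, which holds since $k\ge 3$ forces the cycle part to have more than $k$ vertices once $g\ge 2$) are genuinely met at each application, and (ii) handling the boundary between the $m=3$ loose-cycle optimum and the $m\ge 4$ regime, where the extremal shape switches from $C_{3k-3,k}$ to $C^k_2(m-2,0)$; I would verify this crossover by a direct comparison of the two transmissions at $m=3$ and $m=4$ to pin down exactly where $C^k_2(m-2,0)$ overtakes the loose cycle.
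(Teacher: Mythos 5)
Your plan is a local-perturbation argument (shrink the cycle, then consolidate pendant material by edge-moving lemmas), which is a genuinely different route from the paper's, but as written it has two gaps that are precisely the hard parts. First, the girth-reduction step is only announced, not proved: you say you ``would need either a direct edge-moving lemma that shortens the cycle \ldots or a careful distance-accounting argument,'' and no such lemma exists in the paper (Lemma~\ref{cycle-min-moving1} requires $e$ to be a \emph{pendant} edge, so it cannot be applied to branches hanging off cycle vertices, and in any case it never shortens the cycle). Worse, the step is false as a blanket principle: at $m=3$ the length-$3$ loose cycle strictly beats every girth-$2$ configuration, so any cycle-shortening move must be shown to decrease $\sigma$ only for $m\ge 4$, and you have no mechanism for that. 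Second, the consolidation step misuses Lemmas~\ref{max-moving1} and~\ref{max-moving2}: their reverses say that \emph{balancing} two pendant paths of fixed total length decreases $\sigma$, which pushes toward equal path lengths, not toward collapsing everything into a hyperstar $S_{m-1,k}$ at one cycle vertex. Reaching $C^k_2(m-2,0)$ requires moves that shorten paths while increasing the number of branches at a single vertex, and neither lemma provides that.

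The paper avoids all of this with a short global counting argument you should compare against. Since $n=(k-1)m$, every pair of vertices inside a common edge is at distance $1$ and every other pair is at distance at least $2$; if the cycle has length $2$ exactly one pair is covered by two edges, so $\sigma(G)\ge\bigl[m\binom{k}{2}-1\bigr]+2\bigl[\binom{n}{2}-m\binom{k}{2}+1\bigr]$, while if the cycle has length at least $3$ all $m\binom{k}{2}$ intra-edge pairs are distinct and $\sigma(G)\ge m\binom{k}{2}+2\bigl[\binom{n}{2}-m\binom{k}{2}\bigr]$, which is exactly $1$ smaller. Equality holds in either bound iff the diameter is at most $2$, which forces $G\cong C_2^k(m-2,0)$ in the first case and $G\cong C_{3k-3,k}$ (hence $m=3$) in the second; a short diameter argument rules out every other girth-$\ge 3$ hypergraph beating the girth-$2$ bound. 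This yields both extremality and uniqueness in a few lines, including the $m=3$ crossover you flagged, without any edge-moving machinery. If you want to salvage your approach you must actually supply and prove the cycle-shortening and star-consolidation lemmas; as it stands the proposal is an outline whose two central steps are unsupported.
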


\begin{proof} The order of $G$ is $n=(k-1)m$.
If  the cycle length of $G$ is $2$, then there are exactly two edges with exactly two vertices in common, and thus
\[
\sigma (G)\ge \left[m{k\choose 2}-1\right]\cdot 1+\left[{n\choose 2}-m{k\choose 2}+1\right]\cdot 2 =(k-1)m\left[(k-1)m-1-\frac{k}{2}\right]+1.
\]
with equality  if and only if  the diameter of $G$ is at most $2$, i.e., $G\cong C_2^k(m-2,0)$.

If the cycle length of $G$ is at least $3$, then any two edges have at most one vertex in common, and thus
\[
\sigma (G)\ge m{k\choose 2}\cdot 1+\left[{n\choose 2}-m{k\choose 2}\right]\cdot 2=(k-1)m\left[(k-1)m-1-\frac{k}{2}\right].
\]
with equality  if and only if  the diameter of $G$ is at most $2$, i.e.,  $G\cong C_{3k-3,k}$ and $m=3$.

Suppose that the cycle length of $G$ is at least $3$ and $G\not\cong C_{3k-3,k}$.
Then the diameter of $G$ is at least three. If the diameter of $G$ is at least four, then it is obvious that
$\sigma (G)>(k-1)m\left[(k-1)m-1-\frac{k}{2}\right]+1$. If the diameter of $G$ is three, then it is impossible that there is exactly one pair of vertices with distance three, and thus
$\sigma (G)>(k-1)m\left[(k-1)m-1-\frac{k}{2}\right]+1$.

Now we know that if $m=3$, then $\sigma (G)=(k-1)m\left[(k-1)m-1-\frac{k}{2}\right]$ with $G\cong C_{3k-3,k}$, and if $m=2$ or $m\ge 4$, then  $\sigma (G)=(k-1)m\left[(k-1)m-1-\frac{k}{2}\right]+1$ with $G\cong C_2^k(m-2,0)$.
\end{proof}

Note that, for $m\ge 5$,
$C^2_3(m-3,0,0)$
is the unique unicyclic graph of size $m$ with maximum transmission, see \cite{TD}. In the following, we determine the unique hypergraphs with maximum transmission in the set of $k$-uniform unicyclic hypergraphs of fixed size for $k\ge 3$.

\begin{Lemma} \label{cycle-max-moving1}
For $k\geq 3$ and $g\geq 3$, let $G=C^k_{g(k-1)}(H_1,\ldots,H_{g(k-1)})$. Let $G_1^*$ be the $k$-uniform hypergraph obtained from $G$ by moving $e_1$ from $v_k$ to $v_{(g-1)(k-1)+1}$, and  $G_2^*$  the $k$-uniform hypergraph obtained from $G$ by moving $e_g$ from $v_{(g-1)(k-1)+1}$ to $v_k$.
Then
$\sigma (G_1^*)>\sigma (G)$ or $\sigma (G_2^*)>\sigma (G)$.
\end{Lemma}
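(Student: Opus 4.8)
The plan is to pin down the two modified hypergraphs, reduce the change $\sigma(G_i^*)-\sigma(G)$ to a single family of core-to-bulk terms, and then \emph{select} the beneficial move by comparing two weighted sums rather than adding the two differences. Write $a=v_k$ and $b=v_{(g-1)(k-1)+1}$ for the two cycle vertices adjacent along the cycle to the apex $v_1$, where $\{v_1\}=e_1\cap e_g$, $\{a\}=e_1\cap e_2$ and $\{b\}=e_{g-1}\cap e_g$. First I would check directly from the definition of moving edges that both operations collapse the $g$-cycle to a digon: in $G_1^*$ the new edge $e_1'=(e_1\setminus\{a\})\cup\{b\}$ and $e_g$ share $\{v_1,b\}$, while the arc $e_2,\dots,e_{g-1}$ becomes a pendant path hanging at $b$ with $a$ at its far end; in $G_2^*$ the new edge $e_g'=(e_g\setminus\{b\})\cup\{a\}$ and $e_1$ share $\{v_1,a\}$, and the same arc becomes a pendant path at $a$ with $b$ at its far end. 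Thus the two moves are mirror images that fix everything around $v_1$ and push either $a$ or $b$ to the end of a pendant path; in each case the bulk is connected to the rest through only one of $a,b$ instead of both.

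Next I would partition $V(G)$ into a \emph{core} $\mathcal C$, consisting of $v_1$ with its branch and of the interiors of $e_1,e_g$ with their branches, and a \emph{bulk} $B$, the arc from $a$ to $b$ through $e_2,\dots,e_{g-1}$ with all its branches. For $y\in B$ let $P(y),Q(y)$ be the distances inside $B$ from $a$, respectively $b$, to $y$, and put $u(y)=P(y)-Q(y)$. The point is that every core-to-bulk shortest path in $G$ enters $B$ only at $a$ or at $b$, so $d_G(x,y)=\min\{d_G(x,a)+P(y),\,d_G(x,b)+Q(y)\}$, whereas after each move only one of the two entrances survives. Computing $d_{G_i^*}(x,y)-d_G(x,y)$ according as $x$ lies in the branch at $v_1$, in the interior of $e_1$, or in the interior of $e_g$ (the only subtlety being that $e_1'$ now contains $b$, so the interior of $e_1$ slides one step toward $b$ in $G_1^*$, and symmetrically in $G_2^*$), I expect the cross-term totals to collapse to
\[
\Delta_1^{cb}=K\,N^--\delta_1M^+-\delta_gM^-,\qquad \Delta_2^{cb}=K\,N^+-\delta_1M^+-\delta_gM^-,
\]
where $\delta_1,\delta_g$ are the sizes of the two edge interiors with their branches, $K=|\mathcal C|=n_1+\delta_1+\delta_g$ with $n_1\ge1$ the size of the piece of $\mathcal C$ at $v_1$, $N^\pm=\sum_{y\in B}\max\{0,\pm u(y)\}$ and $M^\pm=|\{y\in B:\pm u(y)\ge1\}|$. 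The core-to-core distances are unchanged, and the bulk-to-bulk change $\Delta_i^{bb}\ge0$ is nonnegative (the shortcut through $v_1$ is destroyed, so those distances can only grow), so $\sigma(G_i^*)-\sigma(G)=\Delta_i^{bb}+\Delta_i^{cb}$.

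The crux, and the main obstacle, is that one must \emph{not} add the two differences: with a heavy branch at $b$ and a heavy branch on the interior of $e_1$, the move $G_1^*$ drags these two masses together and $\sigma(G_1^*)+\sigma(G_2^*)-2\sigma(G)$ is negative, so no averaging argument can succeed. Instead I would exploit that the two formulas share the \emph{same} negative part $-\delta_1M^+-\delta_gM^-$ and differ only in the factor $N^-$ versus $N^+$. By the mirror symmetry interchanging $(N^+,M^+,\delta_1)$ with $(N^-,M^-,\delta_g)$ it suffices to treat $N^+\ge N^-$ and prove $\sigma(G_2^*)>\sigma(G)$. Rewriting
\[
\Delta_2^{cb}=n_1N^++\delta_1\bigl(N^+-M^+\bigr)+\delta_g\bigl(N^+-M^-\bigr),
\]
and using $N^+\ge M^+$ together with $N^+\ge N^-\ge M^-$, each parenthesis is nonnegative, so $\Delta_2^{cb}\ge n_1N^+$. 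Since the endpoint $b$ alone contributes $u(b)=g-2\ge1$ to $N^+$ and $n_1\ge1$, we get $\Delta_2^{cb}\ge n_1N^+\ge g-2\ge1>0$, and with $\Delta_2^{bb}\ge0$ this yields $\sigma(G_2^*)>\sigma(G)$.

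I expect the delicate part to be the honest bookkeeping behind the displayed identities: every core-to-bulk distance is a minimum over the two arcs of the cycle, each move severs exactly one of these entrances, and the one-step shift of the interiors of $e_1,e_g$ is precisely what produces the correction terms $-\delta_1M^+$ and $-\delta_gM^-$. Once these formulas are established, the selection of the right move via the comparison $N^+\ge N^-$ (against $N^-\ge N^+$) is the conceptual heart of the argument, since the naive symmetric combination provably fails.
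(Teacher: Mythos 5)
Your argument is correct and follows essentially the same strategy as the paper's: try both moves, select the one toward the heavier side of the arc, and lower-bound the change by the core-to-bulk contributions, with strict positivity supplied by the $(g-2)$-fold increase in distance between the branch at $v_1$ and the far cycle vertex. The only difference is one of execution: the paper dichotomizes on the raw cardinalities $|A_1|$ versus $|A_2|$ of the branch sets on the two half-arcs and bounds just a few selected families of pairs (asserting all others do not decrease), whereas you compute the cross-term exactly via the potential $u(y)=P(y)-Q(y)$ and dichotomize on $N^+$ versus $N^-$; both routes deliver the same conclusion.
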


\begin{proof}
For $1\leq i\leq g(k-1)$, let $V_i=V(H_i)$.
Let $A_1=\bigcup_{i=k}^{\frac{g(k-1)}{2}}V_i$ and $A_2=\bigcup_{i= \frac{g(k-1)}{2}+2}^{(g-1)(k-1)+1}V_i$ if $g$ is even,
and $A_1=\bigcup_{i=k}^{\frac{(g-1)(k-1)}{2}+1}V_i$ and $A_2=\bigcup_{i= \frac{(g+1)(k-1)}{2}+1}^{(g-1)(k-1)+1}V_i$ if $g$ is odd.

Suppose that $|A_1|\geq |A_2|$.
As we pass from $G$ to $G_1^*$,
the distance between a vertex of $U_1=\bigcup_{i=2}^{k-1}V_i$ and a vertex of $A_1$ is increased by at least $1$,
the distance between a vertex of $U_1$ and a vertex of $A_2$ is decreased by $1$,
the distance between a vertex of $V_1$ and $V_k$ is increased by $g-2$,
and the distance between any other vertex
pair is increased or remains unchanged. Thus
\begin{eqnarray*}
\sigma _{G^*_1}(U_1,A_1)-\sigma _G(U_1,A_1)&\geq&|U_1||A_1|,\\
\sigma _{G^*_1}(U_1,A_2)-\sigma _G(U_1,A_2)&=&-|U_1||A_2|,\\
\sigma _{G^*_1}(V_1,V_k)-\sigma _G(V_1,V_k)&=&(g-2)|V_1||V_k|,
\end{eqnarray*}
and
\begin{eqnarray*}
\sigma (G_1^*)-\sigma (G)&\geq&\sigma _{G^*_1}(U_1,A_1)+ \sigma _{G^*_1}(U_1,A_2)+ \sigma _{G^*_1}(V_1,V_k) \\
&&-\left(\sigma _G(U_1,A_1)+ \sigma _G(U_1,A_2)+ \sigma _G(V_1,V_k)\right) \\
&\geq& |U_1||A_1|-|U_1||A_2|+(g-2)|V_1||V_k|\\
&=&|U_1|(|A_1|-|A_2|)+(g-2)|V_1||V_k|\\
&> &0.
\end{eqnarray*}
It follows that $\sigma (G_1^*)> \sigma (G)$.

Now suppose that $|A_1|< |A_2|$.
As we pass from $G$ to $G_2^*$,
the distance between a vertex of $U_2=\bigcup_{i=(g-1)(k-1)+2}^{g(k-1)}V_i$ and a vertex of $A_1$ is decreased by  $1$,
the distance between a vertex of $U_2$ and a vertex of $A_2$ is increased by at least $1$,
the distance between a vertex of $V_1$ and a vertex of $V_{(g-1)(k-1)+1}$ is increased by $g-2$,
and the distance between any other vertex
pair is increased or remains unchanged. Thus
\begin{eqnarray*}
\sigma _{G^*_2}(U_2,A_1)-\sigma _G(U_2,A_1)&=&-|U_2||A_1|,\\
\sigma _{G^*_2}(U_2,A_2)-\sigma _G(U_2,A_2)&\geq&|U_2||A_2|,\\
\sigma _{G^*_2}(V_1,V_{(g-1)(k-1)+1})-\sigma _G(V_1,V_{(g-1)(k-1)+1})&=&(g-2)|V_1||V_{(g-1)(k-1)+1}|,
\end{eqnarray*}
and
\begin{eqnarray*}
\sigma (G_2^*)-\sigma (G)
&\geq&\sigma _{G^*_2}(U_2,A_1)+ \sigma _{G^*_2}(U_2,A_2)+ \sigma _{G^*_2}(V_1,V_{(g-1)(k-1)+1})\\
&&-\left(\sigma _G(U_2,A_1)+ \sigma _G(U_2,A_2)+ \sigma _G(V_1,V_{(g-1)(k-1)+1})\right)  \\
&\geq&- |U_2||A_1|+|U_2||A_2|+(g-2)|V_1||V_{(g-1)(k-1)+1}|\\
&=&|U_2|(|A_2|-|A_1|)+(g-2)|V_1||V_{(g-1)(k-1)+1}|\\
&>&0.
\end{eqnarray*}
Therefore $\sigma (G_2^*)>\sigma (G)$.
\end{proof}

\begin{Lemma} \label{cycle-max-moving2}
For $k\geq 3$, let $G=C^k_{2(k-1)}\left(H_1,\dots, H_{2(k-1)}\right)$ and
let $G^*$ be the $k$-uniform hypergraph obtained from $G$ by moving all edges in $E_{H_k}(v_k)$ from $v_k$ to $v_2$.
Suppose that $\sum_{i=k+1}^{2(k-1)}|V(H_i)|\geq |V(H_2)|$.
Then
$\sigma (G^*)>\sigma (G)$.
\end{Lemma}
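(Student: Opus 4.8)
The plan is to evaluate $\sigma(G^*)-\sigma(G)$ exactly by exploiting the cut-vertex structure of the appendage $H_k$. Write $V_i=V(H_i)$ for $1\le i\le 2(k-1)$ and set $W=V_k\setminus\{v_k\}$, the vertices of $H_k$ other than its attachment point $v_k$. Moving all edges of $E_{H_k}(v_k)$ from $v_k$ to $v_2$ simply detaches $H_k$ from the corner $v_k$ of the loose $2$-cycle and reattaches it, with its internal structure intact, at $v_2\in e_1$; thus $W$ is exactly the mass that is relocated, and since the move is nontrivial, $|W|=|V_k|-1\ge 1$.

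First I would argue that only pairs with exactly one endpoint in $W$ can change distance. Since $G^*-W=G-W$, distances between two vertices outside $W$ are unchanged, and since the internal structure of $H_k$ is preserved, distances between two vertices of $W$ are unchanged. For $x\in W$, write $\delta(x)=d_{H_k}(v_k,x)$. Because $v_k$ in $G$ and $v_2$ in $G^*$ are cut vertices separating $W$ from everything else, for every $y\notin W$ one has $d_G(x,y)=d_G(y,v_k)+\delta(x)$ and $d_{G^*}(x,y)=d_G(y,v_2)+\delta(x)$, where we have used that the relocated appendage creates no shortcut, so $d_{G^*}(y,v_2)=d_G(y,v_2)$. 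Summing over all such pairs yields the exact identity
\[
\sigma(G^*)-\sigma(G)=|W|\sum_{y\notin W}\bigl(d_G(y,v_2)-d_G(y,v_k)\bigr).
\]

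The remaining task is to compute $D:=\sum_{y\notin W}\bigl(d_G(y,v_2)-d_G(y,v_k)\bigr)$ from the geometry of $C_{2(k-1),k}$, whose edges $e_1=\{v_1,\dots,v_k\}$ and $e_2=\{v_k,v_{k+1},\dots,v_{2(k-1)},v_1\}$ meet precisely in the two corners $v_1$ and $v_k$. I would classify the cycle vertices by the difference $d_G(\cdot,v_2)-d_G(\cdot,v_k)$: it is $0$ at $v_1$ and at every $v_i$ with $3\le i\le k-1$, since these lie in $e_1$ together with $v_2$ and $v_k$; it equals $+1$ at each $v_i$ with $k+1\le i\le 2(k-1)$, since such a vertex is at distance $1$ from $v_k$ but distance $2$ from $v_2$; and it equals $+1$ at $v_k$ itself. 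Because each vertex of $V_i$ with $i\ne 2,k$ inherits the difference of its attachment $v_i$, each vertex of $V_2$ contributes $-1$, and only the single vertex $v_k$ survives from $V_k$ in the sum, I obtain
\[
D=1-|V_2|+\sum_{i=k+1}^{2(k-1)}|V_i|.
\]
The hypothesis $\sum_{i=k+1}^{2(k-1)}|V(H_i)|\ge|V(H_2)|$ then gives $D\ge 1$, and together with $|W|\ge 1$ this forces $\sigma(G^*)-\sigma(G)=|W|\,D>0$.

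I expect the one delicate point to be the distance bookkeeping of the first two steps: one must verify carefully that relocating the appendage $H_k$ leaves every external distance (in particular all $d(\cdot,v_2)$ and $d(\cdot,v_k)$) and every internal distance of $H_k$ unchanged, so that the whole change in $\sigma$ is concentrated in the $W$-to-outside pairs. Once that reduction is secured, the evaluation of $D$ follows routinely from the distances in the loose $2$-cycle.
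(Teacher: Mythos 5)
Your proof is correct and follows essentially the same route as the paper's: both arguments reduce the change in $\sigma$ to the pairs with exactly one endpoint in $W=V(H_k)\setminus\{v_k\}$, note that such a pair changes by $d_G(y,v_2)-d_G(y,v_k)\in\{-1,0,+1\}$ according to where $y$ sits, and arrive at the same exact identity $\sigma(G^*)-\sigma(G)=|W|\bigl(\sum_{i=k+1}^{2(k-1)}|V(H_i)|-|V(H_2)|+1\bigr)>0$. Your cut-vertex bookkeeping merely makes explicit the distance changes that the paper asserts directly.
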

\begin{proof}
As we pass from $G$ to $G^*$,
the distance between a vertex of $U_1=V(H_k)\setminus\{v_k\}$ and a vertex of $U_2=\bigcup_{i=k+1}^{2(k-1)}V(H_i)$ is increased by $1$,
the distance between a vertex of $U_1$ and a vertex of $V(H_2)$ is decreased by $1$,
the distance between a vertex of $U_1$ and $v_k$ is increased by $1$,
and the distance between any other vertex
pair remains unchanged. Thus
\begin{eqnarray*}
\sigma _{G^*}(U_1,U_2)-\sigma _G(U_1,U_2)&=&|U_1||U_2|\geq |U_1||V(H_2)|,\\
\sigma _{G^*}(U_1,V(H_2))-\sigma _G(U_1,V(H_2))&=&-|U_1||V(H_2)|,\\
\sigma _{G^*}(U_1,\{v_k\})-\sigma _G(U_1,\{v_k\})&=&|U_1|,
\end{eqnarray*}
and
\begin{eqnarray*}
\sigma (G^*)-\sigma (G)
&=&\sigma _{G^*}(U_1,U_2)+ \sigma _{G^*}(U_1,V(H_2)+  \sigma _{G^*}(U_1,\{v_k\}) \\
&&-\left(\sigma _G(U_1,U_2)+ \sigma _G(U_1,V(H_2))+ \sigma _G(U_1,\{v_k\})\right)\\
&= & |U_1||U_2|-|U_1||V(H_2)|+|U_1|\\
&> &0.
\end{eqnarray*}
Therefore $\sigma (G^*)> \sigma (G)$.
\end{proof}

For $k\geq 3$, and $p\ge q\ge 1$,
let $\widetilde{C}_2^k(p,q)=G_{w_1, w_1'}(p,q)$, where $G=C_{2k-2,k}$ with edges $e=\{u,v,w_1,\ldots,w_{k-2}\}$ and $f=\{u,v,w'_1,\ldots,w'_{k-2}\}$. Let $\widetilde{C}_2^k(0,0)=C_{2k-2,k}$.

\begin{Lemma} \label{cycle-max-moving3}
For $k\geq 3$ and $m\geq 4$,
if $1\leq p\leq q-2$ and $p+q=m-2$, then $W\left(\widetilde{C}_2^k(p+1,q-1)\right)>W\left(\widetilde{C}_2^k(p,q)\right)$.
\end{Lemma}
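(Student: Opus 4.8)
The plan is to realize $\widetilde{C}_2^k(p+1,q-1)$ as the result of relocating the terminal pendant edge of the longer path in $\widetilde{C}_2^k(p,q)$, and then to compute the resulting change in transmission \emph{exactly}. Write $e=\{u,v,w_1,\dots,w_{k-2}\}$ and $f=\{u,v,w_1',\dots,w_{k-2}'\}$ for the two edges of the underlying $C_{2k-2,k}$, let $a_0=w_1,a_1,\dots,a_p$ be the spine of the length-$p$ path attached at $w_1$, and $b_0=w_1',b_1,\dots,b_q$ the spine of the length-$q$ path attached at $w_1'$. In $\widetilde{C}_2^k(p,q)$ the $k-1$ vertices of the last edge other than $b_{q-1}$ form a pendant block $M$ hanging at $b_{q-1}$; deleting this edge and attaching a new pendant edge at $a_p$ produces $\widetilde{C}_2^k(p+1,q-1)$, while leaving the rest of the hypergraph, namely $R:=\widetilde{C}_2^k(p,q-1)$, combinatorially unchanged. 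I identify the two copies of $R$ and the two copies of $M$ so that both hypergraphs share the vertex set $V(R)\cup M$.

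First I would record that every vertex of $M$ is pendant at $b_{q-1}$ before the move and pendant at $a_p$ after it, so that for $x\in M$ and $y\in R$ one has $d(x,y)=1+d_R(b_{q-1},y)$ in $\widetilde{C}_2^k(p,q)$ and $d'(x,y)=1+d_R(a_p,y)$ in $\widetilde{C}_2^k(p+1,q-1)$, where $d,d'$ denote the distances in the two full hypergraphs; moreover the $k-1$ vertices of $M$ stay pairwise at distance $1$, and every pair inside $R$ keeps its distance (a pendant block is never on a shortest $R$--$R$ path). Hence
$$\sigma\big(\widetilde{C}_2^k(p+1,q-1)\big)-\sigma\big(\widetilde{C}_2^k(p,q)\big)=(k-1)\sum_{y\in R}\big(d_R(a_p,y)-d_R(b_{q-1},y)\big)=(k-1)\big(\sigma_R(a_p)-\sigma_R(b_{q-1})\big).$$
So the whole statement reduces to showing that in $R$ the endpoint $a_p$ of the \emph{shorter} pendant path has strictly larger transmission than the endpoint $b_{q-1}$ of the longer one --- the point where the geometry, rather than intuition, must be trusted.

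To evaluate $\sigma_R(a_p)-\sigma_R(b_{q-1})$ I would split $V(R)$ into the block $P_A$ (the whole $w_1$-path except $w_1$, of size $p(k-1)$), the block $P_B$ (the whole $w_1'$-path except $w_1'$, of size $(q-1)(k-1)$), and the cycle block $C$ of size $2k-2$ containing $w_1,w_1'$. On $P_A$ the shortest paths from $b_{q-1}$ factor through $w_1$ at the fixed cost $d_R(b_{q-1},w_1)=q+1$, and the reflection symmetry of the length-$p$ hyperpath gives $\sum_{y\in P_A}d_R(a_p,y)=\sum_{y\in P_A}d_R(w_1,y)-p$; on $P_B$ the symmetric statement uses $d_R(a_p,w_1')=p+2$ together with the reflection symmetry of the length-$(q-1)$ hyperpath; and on $C$ the swap automorphism $w_i\leftrightarrow w_i'$ of $C_{2k-2,k}$ forces $\sum_{y\in C}d_R(w_1,y)=\sum_{y\in C}d_R(w_1',y)$. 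Adding the three contributions, all the terms carrying the factor $k-1$ cancel and one is left with $\sigma_R(a_p)-\sigma_R(b_{q-1})=(q-1)-p$.

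Since $p\le q-2$ gives $(q-1)-p\ge 1$, this yields $\sigma(\widetilde{C}_2^k(p+1,q-1))-\sigma(\widetilde{C}_2^k(p,q))=(k-1)\big((q-1)-p\big)>0$, as required. I expect the main obstacle to be the third paragraph: one must keep careful track of the leaf vertices inside each pendant edge when writing the within-block transmissions, and it is only after invoking both the reflection symmetry of the two hyperpaths and the $e\leftrightarrow f$ symmetry of the loose $2$-cycle that the bulky $(k-1)$-weighted terms cancel, leaving the clean value $(q-1)-p$. Everything else is routine bookkeeping of distances along pendant paths, and the final difference is an exact equality rather than a mere inequality.
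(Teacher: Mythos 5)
Your proof is correct and follows essentially the same route as the paper: both realize $\widetilde{C}_2^k(p+1,q-1)$ by moving the terminal edge $f_q$ of the longer pendant path from $v_{q-1}$ to the tip of the shorter one, reduce the change in transmission to the change in the cross-sum between the moved $(k-1)$-vertex block and the rest of the hypergraph, and arrive at the same exact value $(k-1)(q-p-1)>0$. The only difference is bookkeeping: the paper evaluates the two cross-sums by explicit enumeration over all vertex classes, whereas you shortcut the difference to $(k-1)\bigl(\sigma_R(a_p)-\sigma_R(b_{q-1})\bigr)$ and evaluate it via the reflection symmetry of the pendant hyperpaths and the swap symmetry of the loose $2$-cycle, which I checked yields the same result.
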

\begin{proof} Let $G=\widetilde{C}_2^k(p,q)$. Let $e,f$ be the edges of the cycle in $G$ with $e\cap f=\{u,v\}$.
Let $(u_0,e_1,u_1,\ldots, u_{p-1},e_p,u_p)$ and $(v_0,f_1,v_1,\ldots, v_{q-1},f_q,v_q)$ be the pendant paths at $u_0$ and $v_0$, respectively, where $u_0\in e\setminus\{u,v\}$ and $v_0\in f\setminus\{u,v\}$.
Let $G'$ be the $k$-uniform hypergraph obtained from $G$ by moving $f_q$ from $v_{q-1}$ to $u_p$. Obviously, $G'\cong \widetilde{C}_2^k(p+1,q-1)$.
Let $U=V(G)\setminus(f_q\setminus\{v_{q-1}\})$.
By direct calculation, we have
\begin{eqnarray*}
&&\sigma _G(f_q\setminus\{v_{q-1}\},U)\\
&=&\sum_{w\in f_q\setminus\{v_{q-1}\}}\left(\sum^{q-1}_{i=0}d_G(w,v_i)+\sum^p_{i=1}d_G(w,u_i)+\sum^{q-1}_{i=1}\sum_{z\in f_i\setminus\{v_{i-1},v_i\}}d_G(w,z)\right.\\
&&\left.+\sum^{p}_{i=1}\sum_{z\in e_i\setminus\{u_{i-1},u_i\}}d_G(w,z)+\sum_{z\in f\setminus\{v_0\}}d_G(w,z)+\sum_{z\in e\setminus\{u,v\}}d_G(w,z)\right)\\
&=&\sum_{w\in f_q\setminus\{v_{q-1}\}}\left(\sum^{q-1}_{i=0}(q-i)+\sum^p_{i=1}(q+2+i)+\sum^{q-1}_{i=1}\sum_{z\in f_i\setminus\{v_{i-1},v_i\}}(q+1-i)\right.\\
&&\left.+\sum^{p}_{i=1}\sum_{z\in e_i\setminus\{u_{i-1},u_i\}}(q+2+i)+\sum_{z\in f\setminus\{v_0\}}(q+1)+\sum_{z\in e\setminus\{u,v\}}(q+2)\right)\\
&=&(k-1)\left[\frac{(q+1)q}{2}+\frac{(p+2q+5)p}{2}+\sum^{q-1}_{i=1}(k-2)(q+1-i)\right.\\
&&\left.+\sum^{p}_{i=1}(k-2)(q+2+i)+(k-1)(q+1)+(k-2)(q+2)\right]\\
&=&(k-1)\left[\frac{(q+1)q}{2}+\frac{(p+2q+5)p}{2}+\frac{(k-2)(q+2)(q-1)}{2}\right.\\
&&\left.+\frac{(k-2)(p+2q+5)p}{2}+(k-1)(q+1)+(k-2)(q+2)\right]\\
&=&(k-1)\left[\frac{(q+1)q}{2}+\frac{(p+2q+5)p}{2}+\frac{(k-2)(p^2+q^2+2pq+5p+q-2)}{2}\right.\\
&&\left. \vphantom{\frac{(k-2)(p^2+q^2+2pq+5p+q-2)}{2}}
+(2k-3)(q+1)+k-2\right],
\end{eqnarray*}
and
\begin{eqnarray*}
&&\sigma _{G'}(f_q\setminus\{v_{q-1}\},U)\\
&=&(k-1)\left[\frac{(p+2)(p+1)}{2}+\frac{(q+2p+6)(q-1)}{2}\right.\\
&&\left. +\frac{(k-2)(p^2+q^2+2pq+5q+p-6)}{2}+(2k-3)(p+2)+k-2\right].
\end{eqnarray*}
Note that
\begin{eqnarray*}
\sigma (G)&=&\sigma _G(f_q\setminus\{v_{q-1}\})+\sigma _G(U)+\sigma _G(f_q\setminus\{v_{q-1}\},U),\\
\sigma (G')&=&\sigma _{G'}(f_q\setminus\{v_{q-1}\})+\sigma _{G'}(U)+\sigma _{G'}(f_q\setminus\{v_{q-1}\},U).
\end{eqnarray*}
Since $\sigma _G(U)=\sigma _{G'}(U)$ and $\sigma _G(f_q\setminus\{v_{q-1}\})=\sigma _{G'}(f_q\setminus\{v_{q-1}\})$,
we have
\begin{eqnarray*}
&&\sigma (G')-\sigma (G)\\
&=&\sigma _{G'}(f_q\setminus\{v_{q-1}\})+\sigma _{G'}(U)+\sigma _{G'}(f_q\setminus\{v_{q-1}\},U)\\
&&-\left(\sigma _G(f_q\setminus\{v_{q-1}\})+\sigma _G(U)+\sigma _G(f_q\setminus\{v_{q-1}\},U)\right)\\
&=&\sigma _G(f_q\setminus\{v_{q-1}\},U)-\sigma _{G'}(f_q\setminus\{v_{q-1}\},U)\\
&=&
(k-1)\left[\frac{(p+2)(p+1)}{2}+\frac{(q+2p+6)(q-1)}{2}\right.\\
&&\left. +\frac{(k-2)(p^2+q^2+2pq+5q+p-6)}{2}+(2k-3)(p+2)+k-2\right]\\
&&-(k-1)\left[\frac{(q+1)q}{2}+\frac{(p+2q+5)p}{2}+\frac{(k-2)(p^2+q^2+2pq+5p+q-2)}{2}\right.\\
&&\left. \vphantom{\frac{(k-2)(p^2+q^2+2pq+5p+q-2)}{2}}+(2k-3)(q+1)+k-2\right]\\
&=&(k-1)(q-p-1)\\
&>&0.
\end{eqnarray*}
Thus $\sigma (G')>\sigma (G)$.
\end{proof}

\begin{Theorem} \label{cycle-max} For $k\geq 3$ and $m\geq 2$, let $G$ be a $k$-uniform unicyclic hypergraph of size $m$  with maximum transmission.
Then $G\cong \widetilde{C}^k_2\left(\lfloor\frac{m-2}{2}\rfloor,\lceil\frac{m-2}{2}\rceil \right)$.
\end{Theorem}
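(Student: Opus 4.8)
The plan is to take a $k$-uniform unicyclic hypergraph $G$ of size $m$ with maximum transmission and to argue that $G$ must be invariant under every transmission-increasing transformation supplied by Lemmas \ref{max-moving1}--\ref{cycle-max-moving3}. Each of these transformations preserves $k$-uniformity, connectedness, the size $m$ and the unicyclic property, while strictly increasing $\sigma$; hence maximality of $G$ forbids each applicable one, and the absence of any applicable move will pin down the structure of $G$. (The invariance formulation also sidesteps any question of termination or cycling of the moves.) All distances and transmissions below refer to the current hypergraph.

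First I would fix the cycle length $g$. If $g\ge 3$, write $G=C^k_{g(k-1)}(H_1,\dots,H_{g(k-1)})$; then by Lemma \ref{cycle-max-moving1} one of $G_1^\ast,G_2^\ast$ is a size-$m$ unicyclic hypergraph of cycle length $g-1$ with strictly larger transmission. This contradicts maximality, so $g=2$. I then write the two cycle edges as $e,f$ with $e\cap f=\{u,v\}$ and view $G$ as this $2$-cycle together with pendant hypertrees rooted at the vertices of $e\cup f$.

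Next I would normalize the attached hypertrees. By the same merging arguments used for hypertrees, repeated use of Lemmas \ref{max-moving1} and \ref{max-moving2} turns each pendant hypertree into a single pendant path and leaves at most one path at any single vertex; otherwise $\sigma$ could be increased. I would then invoke Lemma \ref{cycle-max-moving2} to move every pendant path off the shared vertices $u,v$ onto private vertices, choosing as target a private vertex on the side carrying the larger branch-mass (in particular a bare private vertex, for which the hypothesis $\sum|V(H_i)|\ge|V(H_2)|$ holds automatically, since $|V(H_2)|=1$): each such move increases $\sigma$, so in the extremal $G$ neither $u$ nor $v$ carries a branch. Finally, if two paths sit at two private vertices of one edge, Lemma \ref{max-moving2} applies (a private vertex carrying only its path has degree $1$ before that path is attached) and, iterated through $(p,q)\to(p+1,q-1)\to\cdots\to(p+q,0)$, concentrates them into a single path at one private vertex. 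With at most one path per edge and none on the shared vertices, the surviving paths lie on private vertices of distinct cycle edges, so $G=\widetilde C_2^k(p,q)$ with $p+q=m-2$ (possibly $q=0$).

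It then remains to optimize the two lengths. Lemma \ref{cycle-max-moving3} shows that passing from $(p,q)$ to $(p+1,q-1)$ increases $\sigma$ whenever $1\le p\le q-2$; checking that the difference formula $(k-1)(q-p-1)$ there stays positive at the boundary $p=0$ lets me run this balancing from any starting split (in particular from a single path $q=m-2$) all the way to $p=\lfloor (m-2)/2\rfloor$, $q=\lceil (m-2)/2\rceil$, so that configuration is the unique maximizer; the small cases reduce to $\widetilde C_2^k(0,0)=C_{2k-2,k}$ for $m=2$ and $\widetilde C_2^k(0,1)$ for $m=3$, both matching the formula. I expect the main obstacle to be the middle stage: verifying that the hypotheses of Lemmas \ref{cycle-max-moving2} and \ref{max-moving2} can always be met by a suitable choice of target vertex and direction, so that \emph{any} deviation from the canonical form genuinely enables a transmission-increasing move, together with the boundary case $p=0$ in the final balancing, which Lemma \ref{cycle-max-moving3} does not literally cover.
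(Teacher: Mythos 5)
Your proposal follows essentially the same route as the paper: Lemma~\ref{cycle-max-moving1} forces cycle length $2$, Lemmas~\ref{max-moving1} and~\ref{max-moving2} reduce every attached component to a single pendant path with at most one per cycle edge, Lemma~\ref{cycle-max-moving2} clears the two shared vertices, and Lemma~\ref{cycle-max-moving3} balances the two remaining path lengths --- and you are in fact more careful than the paper about the $p=0$ boundary of Lemma~\ref{cycle-max-moving3}. One small correction to the step you yourself flag as delicate: when invoking Lemma~\ref{cycle-max-moving2} the target should be the private vertex whose attached component is \emph{smallest} (then $\sum_{i=k+1}^{2(k-1)}|V(H_i)|\ge (k-2)\,|V(H_2)|\ge |V(H_2)|$ holds automatically), not a vertex ``on the side carrying the larger branch-mass'' --- that rule can fail, e.g.\ for $k=3$ with a heavy branch on one private vertex and a light one on the other.
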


\begin{proof}
It is trivial if $m=2$. Suppose that $m\geq3$. Let $C$ be the unique cycle of $G$.
By Lemma~\ref{cycle-max-moving1}, the length of $C$ is $2$.
Let $e_1=\{v_{1}, \dots, v_{k}\}$ and $e_2=\{v_{k}, \dots, v_{2k-1}\}$ be the edges of $C$, where $v_{2k-1}=v_1$. Let $H_i$ be the component of $G-E(C)$ containing $v_i$, where $1\leq i \leq 2k-2$. Then $G\cong C^k_{2k-2}(H_1,\ldots,H_{2k-2})$.
By Lemma~\ref{cycle-max-moving2},  $|V(H_1)|=|V(H_k)|=1$.

\noindent  {\bf Claim.} For each $i$ with $1\leq i \leq 2(k-1)$ and $i\ne 1,k$, if $|V(H_i)|>1$, then $H_i$ is a pendant path at $v_i$.

Suppose that $d_{H_i}(v_i)=1$. Let $\Delta(H_i)$ be the maximum degree of $H_i$.


Suppose that $\Delta(H_i)\geq 3$. Note that for $w\in V(H_i)\setminus\{v_i\}$, $d_{H_i}(w)=d_G(w)$ and $d_{H_i}(w,v_i)=d_G(w,v_i)$.
Choose a vertex $v\in V(H_i)$ of degree at least $3$ such that $d_G(v,v_i)$ is as large as possible.
Let $F_1,\dots,F_{d_G(v)}$ be the vertex--disjoint subhypergraphs of $G-v$ with $\bigcup_{j=1}^{d_G(v)}V(F_j)=V(G)\setminus\{v\}$ such that $G[V(F_j)\cup\{v\}]$  is a  $k$-uniform hypertree for $2\leq j\leq d_G(v)$ and $G[V(F_1)\cup\{v\}]$ is a $k$-uniform unicyclic hypergraph containing $v_i$. Suppose that $G[V(F_j)\cup\{v\}]$  is not a pendant path at $v$ for some $j$ with $2\leq j\leq d_G(v)$.
Then there is at least one edge in $ E(G[V(F_j)\cup\{v\}])$ with at least three vertices of degree $2$. We choose such an edge $e=\{w_1,\ldots,w_k\}$ by requiring that
$d_G(v,w_1)$
is as large as possible, where $d_G(v,w_1)=d_G(v,w_l)-1$ for $2\leq l\leq k$.
Then
there are  two pendant paths at different vertices of $e$, say $P$ at $w_s$ and $Q$ at $w_t$, where $2\leq s<t\leq k$. Let $p\geq q\geq 1$ be the lengths of $P$ and $Q$, respectively.
Then $G\cong N_{w_s,w_t}(p,q)$, where $N=G[V(G)\setminus (V(P\cup Q)\setminus\{w_s,w_t\}) ]$.
Obviously,  $d_N(w_s)=d_N(w_t)=1$ and $G'=N_{w_s,w_t}(p+1,q-1)$ is a $k$-uniform unicyclic hypergraph of size $m$.
By Lemma~\ref{max-moving2}, we have $\sigma (G)<\sigma (G')$, a contradiction.
Thus $G[V(F_j)\cup\{v\}]$ is a pendant path at $v$ for $2\leq j\leq d_G(v)$.
Let $l_j$ be the length of the pendant path $G[V(F_j)\cup\{v\}]$ at $v$, where  $2\leq j\leq d_G(v)$ and $l_j\geq 1$.
Then $G\cong F_v(l_2,l_3)$, where $F=G[V(G)\setminus V(F_2\cup F_3)]$. We may assume that $l_2\ge l_3$. Obviously, $G''=F_v(l_2+1,l_3-1)$ is  a $k$-uniform unicyclic hypergraph of size $m$.
By Lemma~\ref{max-moving1}, $\sigma (G)<\sigma (G'')$, a contradiction.
Thus $\Delta(H_i)\le 2$.

If $\Delta(H_i)=1$, then $H_i$ is a pendant edge at $v_i$ in $G$, and the result follows. Suppose that $\Delta(H_i)=2$.
Suppose that $H_i$ is not a pendant path at $v_i$.
Then there is an edge in $H_i$ with at least three vertices of degree $2$.
Choose such an edge $e=\{w_1,\dots,w_k\}$ in $ E(H_i)$ such that $d_G(v_i,w_1)$ is as large as possible, where $d_G(v_i,w_1)=d_G(v_i,w_j)-1$ for $2\leq j\leq k$.
Then there are two pendant paths at different vertices of $e$, say $P'$ at $w_s$ and $Q'$ at $w_t$, where $2\leq s<t\leq k$.
Let $p'$ and $q'$ be the lengths of $P'$ and $Q'$, respectively, where $p'\geq q'\geq 1$.
Then $G\cong \widetilde{N}_{w_s,w_t}(p',q')$, where $\widetilde{N}=G[V(G)\setminus (V(P'\cup Q')\setminus \{w_s,w_t\}) ]$.
Obviously, $d_{\widetilde{N}}(w_s)=d_{\widetilde{N}}(w_t)=1$ and $G^*=\widetilde{N}_{w_s,w_t}(p'+1,q'-1)$ is  a $k$-uniform unicyclic hypergraph of size $m$.
By Lemma~\ref{max-moving2}, we have $\sigma (G)<\sigma (G^*)$, a contradiction.
Thus  $H_i$ is a pendant path at $v_i$ in $G$.

Next suppose that $d_{H_i}(v_i)\geq2$. Let $U_1,\ldots, U_{d_{H_i}(v_i)}$ be the subhypergraphs of $H_i-v_i$
with $\bigcup_{j=1}^{d_{H_i}(v_i)}V(U_j)=V(H_i)\setminus\{v_i\}$ such that $H_i[V(U_j)\cup\{v_i\}]$ is a $k$-uniform hypertree for $1\leq j\leq d_{H_i}(v_i)$.  Similarly as above, each $H_i[V(U_j)\cup\{v_i\}]$ is a pendant path for $1\leq j\leq d_{H_i}(v_i)$.
Let $s_j\geq 1$ be the length of $H_i[V(U_j)\cup\{v_i\}]$ with $1\leq j\leq d_{H_i}(v_i)$.
We may assume that $s_1\geq s_2$. Then $G\cong \widetilde{F}_{v_i}(s_1,s_2)$, where $\widetilde{F}=G[V(G)\setminus(V(U_1)\cup V(U_2))]$. Let $G^{**}\cong \widetilde{F}_{v_i}(s_1+1,s_2-1)$.
Obviously, $G^{**}$ is a $k$-uniform unicyclic hypergraph of size $m$.
By Lemma~\ref{max-moving1}, we have $\sigma (G)<\sigma (G^{**})$, a contradiction. 
This proves the Claim.

Let $r_i=|E(H_i)|$ for $1\leq i\leq 2k-2$. Obviously, $r_1=r_k=0$.

Suppose that 
there are integers $i$ and $j$ with $2\leq i<j\leq k-1$ such that $r_i\geq r_j\geq 1$.
By our Claim, we have $G=H_{v_i,v_j}(r_i,r_j)$, where $H=G[V(G)\setminus (V(H_i\cup H_j)\setminus \{v_i,v_j\})]$.
Let $G'=H_{v_i,v_j}(r_i+1,r_j-1)$. Obviously,  $G'$ is a $k$-uniform unicyclic hypergraph of size $m$.
By Lemma~\ref{max-moving2}, $\sigma (G)<\sigma (G')$, a contradiction.
Thus there is at most one integer $i$ with $2\leq i\leq k-1$ such that $r_i\geq 1$.
Similarly, there is at most one integer $i$ with $k+1\leq i\leq 2k-2$ such that $r_i\geq 1$.
Therefore $G\cong \widetilde{C}^k_2(r_1,r_2)$, where $r_1\geq r_2\geq 0$ and $r_1+r_2=m-2$.
By Lemma~\ref{cycle-max-moving3}, we have $G\cong\widetilde{C}^k_2\left(\left\lfloor\frac{m-2}{2}\rfloor,\lceil\frac{m-2}{2}\right\rceil\right)$.
\end{proof}


\baselineskip=0.25in

\end{document}